\documentclass[runningheads]{llncs}

\usepackage[T1]{fontenc}
\usepackage{graphicx}
\usepackage{algorithm,algorithmic}
\usepackage{amsmath,amssymb,amsfonts}
\usepackage{hyperref}
\usepackage{listings}
\usepackage{xifthen}

\newcommand*{\mathlibdef}[2][]{\href{https://leanprover-community.github.io/mathlib4_docs/find/?pattern=#2\#src}{\ifthenelse{\isempty{#1}}{\lean{#2}}{#1}}}
\newcommand*{\ourdef}[2][]{\href{https://wuprover.github.io/lean_characteristic_set/docs/find/?pattern=#2\#src}{\ifthenelse{\isempty{#1}}{\lean{#2}}{#1}}}

\lstnewenvironment{leancode}{\lstset{frame=single,language=lean,abovecaptionskip=-\medskipamount,basicstyle={\ttfamily\scriptsize}}}{}
\newcommand{\lean}[1]{\texttt{#1}}

\usepackage{todonotes}
\usepackage{color}
\definecolor{keywordcolor}{rgb}{0.7, 0.1, 0.1}   % red
\definecolor{tacticcolor}{rgb}{0.0, 0.1, 0.6}    % blue
\definecolor{commentcolor}{rgb}{0.4, 0.4, 0.4}   % grey
\definecolor{symbolcolor}{rgb}{0.0, 0.1, 0.6}    % blue
\definecolor{sortcolor}{rgb}{0.1, 0.5, 0.1}      % green
\definecolor{attributecolor}{rgb}{0.7, 0.1, 0.1} % red

\newcommand{\init}{\operatorname{init}}
\newcommand{\remdr}{\operatorname{Remdr}}
\newcommand{\zero}{\operatorname{Zero}}

\newtheorem{themcounter}{Theorem}
\newtheorem{theorem}[themcounter]{Theorem}
\newtheorem{proposition}[themcounter]{Proposition}
\newtheorem{corollary}[themcounter]{Corollary}

\begin{document}

\title{Formalizing Wu-Ritt Method in Lean~4}

%\titlerunning{Abbreviated paper title}
% If the paper title is too long for the running head, you can set
% an abbreviated paper title here
%

\author{Yuxuan Xiao \inst{1,2} \and
Hao Shen\inst{2} \and
Junyu Guo \inst {3} \and 
Dingkang Wang\inst{2} \and
Lihong Zhi \inst{2}
\authorrunning{F. Author et al.}
% First names are abbreviated in the running head.
% If there are more than two authors, 'et al.' is used.
%
\institute{School of Mathematics, Shandong University, Shandong, 250100, China
\email{yuxuanxiao@mail.sdu.edu.cn} % xiao-yuxuan@qq.com
\and
State Key Laboratory of Mathematical Sciences, Academy of Mathematics and Systems Science, University of Chinese Academy of Science, Beijing, 100190, China 
\email{\{shenhao24,dwang,lzhi\}@amss.ac.cn}
\and
Institute of Logic and Cognition, Department of Philosophy, Sun Yat-sen University, 510275, Guangdong, China \\
\email{guojy228@mail2.sysu.edu.cn}}}

%Springer Heidelberg, Tiergartenstr. 17, 69121 Heidelberg, Germany
%\email{lncs@springer.com}\\
%\url{http://www.springer.com/gp/computer-science/lncs} \and

%
\maketitle              % typeset the 

% \author{First Author\inst{1}\orcidID{0000-1111-2222-3333} \and
% Second Author\inst{2,3}\orcidID{1111-2222-3333-4444} \and
% Third Author\inst{3}\orcidID{2222--3333-4444-5555}}

% \authorrunning{F. Author et al.}
% % First names are abbreviated in the running head.
% % If there are more than two authors, 'et al.' is used.

% \institute{Princeton University, Princeton NJ 08544, USA \and
% Springer Heidelberg, Tiergartenstr. 17, 69121 Heidelberg, Germany
% \email{lncs@springer.com}\\
% \url{http://www.springer.com/gp/computer-science/lncs} \and
% ABC Institute, Rupert-Karls-University Heidelberg, Heidelberg, Germany\\
% \email{\{abc,lncs\}@uni-heidelberg.de}}

%\maketitle              % typeset the header of the contribution

\begin{abstract}

We formalize the Wu–Ritt characteristic set method for the triangular decomposition of polynomial systems in the Lean~4 theorem prover. Our development includes the core algebraic notions of the method, such as polynomial initials,  orders, pseudo-division, pseudo-remainders with respect to a polynomial or a triangular set, and standard and weak ascending sets. On this basis, we formalize algorithms for computing basic sets, characteristic sets, and zero decompositions, and prove their termination and correctness. In particular, we formalize the well-ordering principle relating a polynomial system to its characteristic set and verify that zero decomposition expresses the zero set of the original system as a union of zero sets of triangular sets away from the zeros of the corresponding initials. This work provides a machine-checked verification of Wu-Ritt’s method in Lean~4 and establishes a foundation for certified polynomial system solving and geometric theorem proving.

\keywords{Wu-Ritt method, ascending sets, pseudo-division, characteristic sets, Lean}

\end{abstract}

\section{Introduction}\label{sec1}

The Wu-Ritt characteristic set method, introduced by Wu Wen-Tsun~\cite{wen1986basic}, is a fundamental tool for solving systems of multivariate polynomial equations and for automated theorem proving in elementary geometry. 

In this paper, we present a Lean~4 formalization of the Wu-Ritt characteristic set method~\cite{lean4prover}, following Wu’s mathematical and algorithmic treatment~\cite{wen2001mathematics}. Our development formalizes the basic concepts of the theory, including pseudo-division, pseudo-remainder with respect to a polynomial or a triangular set, ascending sets, basic sets, and characteristic sets. We verify not only the main algorithms of the method but also the central statements of the Wu--Ritt theory. In particular, we formalize the well-ordering principle that connects a polynomial system with its characteristic set, and we prove the correctness of the zero decomposition algorithm, showing that it expresses the zero set of the original system as a finite union of zero sets of triangular sets, away from the zeros of the corresponding initials.

A formalization of Wu's simple method was previously carried out in Coq~\cite{coqmanual83} for proving geometric theorems \cite{formalwucoq}. Our work provides a comprehensive formalization of the characteristic set method in Lean~4. It builds on Mathlib’s algebraic infrastructure and provides a basis for certified symbolic computation and automated geometry theorem proving in the Lean ecosystem~\cite{mathlib}.

\section{Preliminaries}\label{sec2}

% \subsection{Polynomial Order}

Let \(R\) be a commutative ring with identity, and let \(\sigma\) be a finite set of indices equipped with a linear order \(<\). We write
\(
R[X_i]_{i\in\sigma}
\)
for the polynomial ring under consideration, ordering the variables by declaring
\[
X_i < X_j \quad \text{whenever } i < j.
\]

Let \(p \in R[X_i]_{i\in \sigma}\) be a polynomial.
The greatest variable \(X_m\) appearing in \(p\) is called the \textit{main variable} of \(p\). We define the \textit{main degree} of \(p\), denoted by \(\deg(p)\), to be the degree of \(p\) with respect to its main variable, namely \(\deg_m(p)\). If \(p\) is a constant polynomial, then its main variable is undefined, and we define \(\deg(p)=0\).

We first introduce a collection of basic definitions.

\begin{definition}[{\ourdef[Polynomial Order]{MvPolynomial.order}}]
\label{polyOrder}
Let $p$ and $q$ be two non-zero multivariate polynomials with main variables $X_{m_p}, X_{m_q}$, respectively.
We say $p$ is less than $q$, denoted by $p \prec q$, if
either $m_p < m_q$ or $m_p = m_q$ and $\deg(p) < \deg(q)$.
\end{definition}

In Lean~4, this can be defined as a lexicographical order. The order here is based on the order of the indices $\sigma$, rather than the monomial order used in the formalization of Gröbner bases \cite{guo2026formalizinggrobnerbasistheory}.

\begin{leancode}
def order (p : MvPolynomial σ R) : WithBot σ ×ₗ ℕ := (p.vars.max, p.mainDegree)
\end{leancode}

We say that \(p\) and \(q\) have the same ordering, denoted by \(p \sim q\), if neither \(p \prec q\) nor \(q \prec p\) holds. Equivalently,
\[
p \sim q \quad \Longleftrightarrow \quad m_p = m_q \ \land\ \deg(p)=\deg(q).
\]
We write \(p \preceq q\) if \(p \prec q\) or \(p \sim q\). The relation \(\preceq\) defines a preorder on \(R[X_i]_{i\in \sigma}\), and \(\sim\) is the corresponding equivalence relation. Hence \(\preceq\) induces a partial order on the quotient \(R[X_i]_{i\in \sigma} / {\sim}\).

\begin{leancode}
instance instPreorder : Preorder (MvPolynomial σ R) where
  le := InvImage (· ≤ ·) order
  le_refl := fun _ ↦ by rw [InvImage]
  le_trans := fun _ _ _ ↦ le_trans

theorem lt_def : p < q ↔ p.vars.max < q.vars.max ∨
    p.vars.max = q.vars.max ∧ p.mainDegree < q.mainDegree

instance instSetoid : Setoid (MvPolynomial σ R) := AntisymmRel.setoid (MvPolynomial σ R) (· ≤ ·)
\end{leancode}

\begin{definition}[{\ourdef[Initial]{MvPolynomial.initial}}]
\label{initial}
% The initial of a polynomial $p$ with respect to $i \in \sigma$ is the leading coefficient of $p$ viewed as a univariate polynomial in
% $R'[X_i]$ where $R' = R[X_j]_{j \ne i}$, denoted by $\init_i(p)$.
Let \(p\) be a nonzero polynomial with main variable \(X_i\). Writing \(p\) as a polynomial in \(X_i\),
\[
p = c_d X_i^d + c_{d-1} X_i^{d-1} + \cdots + c_0,
\]
where each \(c_k \in R[X_j]_{j < i}\) and \(c_d \ne 0\), we call \(c_d\) the \textit{initial} of \(p\), and denote it by \(\init(p)\) or simply $I_p$. 
We define the initial of the zero polynomial to be \(0\). If \(p\) is a nonzero constant polynomial, then we set \(I_p = 1\).
\end{definition}

\begin{leancode}
def initialOf (p : MvPolynomial σ R) (i : σ) : MvPolynomial σ R :=
  ∑ s ∈ p.support with s i = p.degreeOf i, monomial (s.erase i) (p.coeff s)

def initial (p : MvPolynomial σ R) : MvPolynomial σ R :=
  if p = 0 then 0 else
    match p.vars.max with
    | ⊥ => 1
    | some m => p.initialOf m
\end{leancode}

\begin{leancode}
theorem initialOf_eq_leadingCoeff [DecidableEq σ] {p : MvPolynomial σ R} {i : σ} :
    p.initialOf i = rename Subtype.val
      (optionEquivLeft R {b // b ≠ i} (rename (Equiv.optionSubtypeNe i).symm p)).leadingCoeff
\end{leancode}

The \textit{initial product} of polynomials \(p_1, p_2, \ldots, p_r\) is defined to be the product of their initials $\prod_{i=1}^r I_{p_i}$. 

\begin{example}
Consider $p = 5 x y^2 + x^3 y, q = y^2 + x^2 y, r = x^5 \in \mathbb{Z}[x,y]$ with $x < y$.
Then $r \prec p$ and $ p \sim q$, $I_p = \init_y(p) = 5 x, I_q = 1$, and $I_r = 1$.
\end{example}

\begin{definition}[{\ourdef[Reduce]{MvPolynomial.reducedTo}}]
\label{reduce}
Let \(p\) be a nonzero polynomial with main variable \(X_m\). A polynomial \(q\) is said to be \textit{reduced with respect to} \(p\) if the degree of \(q\) in \(X_m\) is strictly smaller than the main degree of \(p\). %, namely,
%$\deg_{X_m}(q) < \deg(p).$
By convention, the zero polynomial is reduced with respect to every polynomial. If \(p\) is a constant polynomial, then no nonzero polynomial is reduced with respect to \(p\).
\end{definition}

\begin{leancode}
def reducedTo (q p : MvPolynomial σ R) : Prop :=
  if q = 0 then True
  else
    match p.vars.max with
    | ⊥ => False
    | some m => q.degreeOf m < p.degreeOf m
\end{leancode}

Let \(PS\) be a set of polynomials. We say that a polynomial \(q\) is \textit{reduced with respect to} \(PS\) if it is reduced with respect to every polynomial in \(PS\).

\section{Triangular Set}

\begin{definition}[{\ourdef[Triangular Set]{TriangularSet}}]
\label{triSet}
A set of polynomials \(S\) is called a \textit{triangular set} if all its elements are nonzero and can be arranged as a sequence
\[
s_1, s_2, \ldots, s_r \in R[X_i]_{i\in \sigma},
\]
% whose main variables satisfy
% \[
% X_{m_1} < X_{m_2} < \cdots < X_{m_r}.
% \]
whose main variables \(X_{m_1}, X_{m_2}, \ldots, X_{m_r}\) satisfy
\[
m_1 < m_2 < \cdots < m_r.
\]
\end{definition}

\begin{leancode}
structure TriangularSet (σ R : Type*) [CommSemiring R] [LinearOrder σ] where
  length' : ℕ
  /-- The sequence of polynomials, indexed by `ℕ`. (0-based) -/
  seq : ℕ → MvPolynomial σ R
  elements_ne_zero : ∀ n, n < length' ↔ seq n ≠ 0
  /-- The main variables of the polynomials are strictly increasing. -/
  ascending_max_vars : ∀ n < length' - 1, (seq n).vars.max < (seq (n + 1)).vars.max
\end{leancode}

Especially, our formalization allows $s_1$ to be a nonzero constant. In this case, $m_1$ is undefined, while $m_2 < \ldots < m_r$.

When \(S\) is a triangular set, we always write it as \(s_1,\dots,s_r\) so that the main variables of the \(s_i\) are strictly increasing with \(i\).

\begin{definition}[{\ourdef[Ordering of Triangular Sets]{TriangularSet.order}}]
\label{triSetOrder}
Let \(S=(s_1,\dots,s_r)\) and \(S'=(s'_1,\dots,s'_{r'})\) be two triangular sets. We say that \(S\) is \textit{less than} \(S'\), and write \(S \prec S'\), if one of the following conditions holds:
\begin{enumerate}
    \item there exists \(k \le \min(r,r')\) such that
    \[
    s_1 \sim s'_1,\; s_2 \sim s'_2,\; \ldots,\; s_{k-1} \sim s'_{k-1},
    \]
    while
    \[
    s_k \prec s'_k;
    \]
    \item \(r > r'\) and \(s_i \sim s'_i\) for all \(1 \le i \le r'\).
\end{enumerate}
\end{definition}

\begin{leancode}
-- The indices here are 0-based.
def order (S : TriangularSet σ R) : Lex (ℕ → WithTop (WithBot σ ×ₗ ℕ)) :=
  fun i ↦ if i < S.length then WithTop.some (S i).order else ⊤
\end{leancode}

\begin{leancode}
theorem lt_def : S < T ↔ (∃ k < S.length, S k < T k ∧ ∀ i < k, S i ≈ T i) ∨
    (T.length < S.length ∧ ∀ i < T.length, S i ≈ T i)
\end{leancode}

As in Definition~\ref{polyOrder}, we extend the relations \(\preceq\) and \(\sim\) to triangular sets. The relation \(\preceq\) is a preorder on triangular sets, and it induces a partial order on the quotient \(\mathrm{Triangular Set}/\!\sim\).

The polynomial order is well-founded whenever \(\sigma\) is well-ordered. For triangular sets, however, well-foundedness requires the stronger assumption that \(\sigma\) is \textbf{finite}.

\begin{proposition}[Well-ordering property of triangular sets]
\label{wellOrderingTriSets}
If \(\sigma\) is finite, then every sequence of triangular sets that is strictly decreasing with respect to the order \(\prec\) is finite.
\end{proposition}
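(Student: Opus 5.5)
The plan is to reduce the statement to the well-foundedness of a lexicographic order on a \emph{finite} product of well-founded orders. I would use the encoding \lean{TriangularSet.order}, which sends \(S\) to the function \(i\mapsto (s_{i+1})\text{.order}\) for \(i<r\) and \(i\mapsto\top\) otherwise. By \lean{lt\_def}, \(S\prec S'\) is exactly strict inequality of these functions in \(\mathrm{Lex}(\mathbb N\to \mathrm{WithTop}(\mathrm{WithBot}\,\sigma\times_\ell\mathbb N))\).

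Case 1 of Definition~\ref{triSetOrder} is the first differing coordinate being smaller. For case 2, where \(r>r'\) and \(s_i\sim s'_i\) for \(i\le r'\), the first differing coordinate is index \(r'\) (0-based). There \(S\) has a finite value while \(S'\) has \(\top\). So it suffices to show that no infinite strictly decreasing sequence exists in the image of this encoding.

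\textbf{Step 1 (coordinate order is well-founded).} The type \(\mathrm{WithBot}\,\sigma\) is finite, hence well-founded under its linear order. The lexicographic product with \(\mathbb N\) is well-founded, since \(\mathrm{Prod.Lex}\) preserves well-foundedness. Adjoining \(\top\) preserves it as well. Call the resulting well-ordered type \(\alpha\). Note that Step 1 only needs \(\sigma\) well-ordered; finiteness enters in the next step.

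\textbf{Step 2 (uniform length bound).} Let \(n=|\sigma|\). I claim every triangular set has length \(r\le n+1\). The main variables of \(s_2,\dots,s_r\) are genuine elements of \(\sigma\) and are strictly increasing, so they are pairwise distinct; hence \(r-1\le n\). Only \(s_1\) may be a constant with main variable \(\bot\), which is where the ``\(+1\)'' comes from. Formally, \(i\mapsto (s_{i+1}).\mathrm{vars}.\max\) is a strictly monotone map \(\mathrm{Fin}\,r\to \mathrm{WithBot}\,\sigma\), and the target has \(n+1\) elements, so \(r\le n+1\).

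\textbf{Step 3 (truncate and conclude).} By Step 2, every encoding is \(\top\) at all indices \(\ge n+1\). Hence two encodings are lexicographically related exactly as their restrictions to \(\mathrm{Fin}(n+1)\) are, because the first differing index must be \(<n+1\). The truncation map \(S\mapsto(\text{order}(S)\,i)_{i<n+1}\) is therefore strictly monotone into \(\mathrm{Lex}(\mathrm{Fin}(n+1)\to\alpha)\). This order is well-founded, either via Mathlib's \lean{Pi.Lex} well-foundedness for finite index types, or by an easy induction on \(n\) using \(\mathrm{Fin}(n+1)\to\alpha\cong\alpha\times_\ell(\mathrm{Fin}\,n\to\alpha)\). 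Pulling back along the truncation with \lean{InvImage} gives well-foundedness of \(\prec\) on triangular sets. Equivalently, every strictly decreasing sequence is finite.

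\textbf{Main obstacle.} I expect the main difficulty to be Step 3's compatibility argument: showing rigorously that the Lex order on \(\mathbb N\to\alpha\) agrees with the one on \(\mathrm{Fin}(n+1)\to\alpha\) for eventually-\(\top\) functions. The infinite \(\mathrm{Lex}\) on \(\mathbb N\to\alpha\) is not well-founded in general, so the bound of Step 2 is essential and must be threaded through carefully. If the Pi-lex API is awkward, I would fall back on a direct argument. Given an infinite decreasing sequence \(S^{(0)}\succ S^{(1)}\succ\cdots\), the first coordinates form a non-increasing sequence in the well-founded \(\alpha\), so they stabilize. After discarding a prefix, the second coordinates are non-increasing and also stabilize. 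Repeating this \(n+1\) times yields a tail on which all encodings are equal. That contradicts strict decrease.
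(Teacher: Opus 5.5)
Your proposal is correct and matches the route implied by the paper's formalization. You pull \(\prec\) back along the Lex encoding \lean{TriangularSet.order}, and you use finiteness of \(\sigma\) exactly where the paper says it is needed: strictly increasing main variables in \(\mathrm{WithBot}\,\sigma\) bound every length by \(|\sigma|+1\), so all encodings are \(\top\) past a fixed index, and you then apply well-foundedness of the lexicographic order on a finite product of well-founded orders. The paper gives no written proof, only the Lean instance \lean{WellFoundedLT} for triangular sets over finite \(\sigma\), so your argument (including the check that case~2 of Definition~\ref{triSetOrder} is a finite value beating \(\top\) at index \(r'\)) supplies that missing reasoning.
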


\begin{leancode}
instance [Finite σ] : WellFoundedLT (TriangularSet σ R)
\end{leancode}

This well-ordering property is a key ingredient in proving the termination of the algorithms for computing characteristic sets and zero decompositions.

\begin{definition}[{\ourdef[Ascending Set]{TriangularSet.isAscendingSet}}]
\label{ascSet} %\cite{wen1986basic}
A triangular set \(AS = \{as_1, \ldots, as_r\}\) is called an \emph{ascending set} if, for every pair \(i,j\) with \(i < j\), the polynomial \(as_j\) is reduced with respect to \(as_i\).
\end{definition}

\begin{leancode}
class AscendingSetTheory (σ R : Type*) [CommSemiring R] [DecidableEq R] [LinearOrder σ] where
  /-- The reduction relation used to define the ascending property. -/
  protected reducedTo' : MvPolynomial σ R → MvPolynomial σ R → Prop
  decidableReducedTo : DecidableRel reducedTo' := by infer_instance
  /-- A key property linking the ascending set structure to the initial.
  If `S` is an ascending set, the initial of any non-constant element in `S`
  must be reduced with respect to `S`. -/
  protected initial_reducedToSet_of_max_vars_ne_bot : ∀ ⦃S : TriangularSet σ R⦄ ⦃i : ℕ⦄,
    (∀ ⦃i j⦄, i < j → j < S.length → reducedTo' (S j) (S i)) →
    (S i).vars.max ≠ ⊥ → (S i).initial.reducedToSet S

def isAscendingSet (S : TriangularSet σ R) : Prop :=
  ∀ ⦃i j⦄, i < j → j < S.length → AscendingSetTheory.reducedTo' (S j) (S i)
\end{leancode}

% Another definition of ascending set is based on the reduction of initials, i.e. $\init(as_j)$ is reduced with respect to $as_i$.

The essential property of an ascending set \(AS\) is that, for every nonconstant polynomial \(p \in AS\), its initial \(I_p\) is reduced with respect to \(AS\) itself. This property plays a crucial role in the termination proof of the zero decomposition algorithm.

\begin{definition}[{\ourdef[Basic Set]{HasBasicSet}}]
\label{basicSet}
A basic set $BS$ of a polynomial set $PS$ is the minimal ascending set contained in $PS$.
\end{definition}
\begin{leancode}
class HasBasicSet (σ R : Type*) [CommSemiring R] [DecidableEq R] [LinearOrder σ]
    extends AscendingSetTheory σ R where
  /-- Computes a Basic Set from a list of polynomials. -/
  basicSet : List (MvPolynomial σ R) → TriangularSet σ R
  basicSet_isAscendingSet (l : List (MvPolynomial σ R)) : (basicSet l).isAscendingSet
  basicSet_subset (l : List (MvPolynomial σ R)) : ∀ ⦃c⦄, c ∈ basicSet l → c ∈ l
  basicSet_minimal (l : List (MvPolynomial σ R)) :
      ∀ ⦃S⦄, S.isAscendingSet → (∀ ⦃p⦄, p ∈ S → p ∈ l) → basicSet l ≤ S
  /-- Order reduction property: appending a reduced element strictly decreases the basic set order.
  Crucial for proving termination of zero decomposition. -/
  basicSet_append_lt_of_exists_reducedToSet : ∀ ⦃l1 l2 : List (MvPolynomial σ R)⦄,
      (∃ p ∈ l2, p ≠ 0 ∧ p.reducedToSet (basicSet l1)) → basicSet (l2 ++ l1) < basicSet l1
\end{leancode}

The next proposition is used to show that, throughout the algorithm, the order of the basic set strictly decreases.

\begin{proposition}
Let \(p\) be a polynomial reduced with respect to \(PS\). Then the basic set of \(PS \cup \{p\}\) is strictly smaller than the basic set of \(PS\) with respect to the order on basic sets.
\end{proposition}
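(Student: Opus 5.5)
Here $p$ is nonzero and reduced with respect to the basic set $BS=(b_1,\dots,b_r)$ of $PS$; this is the form recorded in \lean{basicSet\_append\_lt\_of\_exists\_reducedToSet}. The plan is to build an explicit ascending set $T\subseteq PS\cup\{p\}$ with $T\prec BS$. Minimality of the basic set $BS'$ of $PS\cup\{p\}$ (Definition~\ref{basicSet}, field \lean{basicSet\_minimal}) then gives $BS'\preceq T\prec BS$, which is the claim.

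To construct $T$, first consider the case where $p$ is a nonzero constant. Then $T=(p)$ is an ascending set of length one. If $b_1$ is nonconstant, then $p\prec b_1$ by Definition~\ref{polyOrder}, so case 1 of Definition~\ref{triSetOrder} applies with $k=1$. If $b_1$ is a constant, then no nonzero polynomial is reduced with respect to $b_1$, contradicting the hypothesis. Hence $b_1$ is nonconstant.

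Now let $p$ be nonconstant with main variable $X_m$. Let $k$ be the largest index such that the main variable of $b_k$ is at most $X_m$, or $k=0$ if there is none, and put
\[
T=(b_1,\dots,b_{k'},p),
\]
where $k'=k$ if $m_{b_k}<m$ and $k'=k-1$ if $m_{b_k}=m$.

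Next I check that $T$ is an ascending set. The main variables are strictly increasing by the choice of $k'$. Each pair $b_i,b_j$ is already reduced, since $BS$ is ascending. It remains to see that $p$ is reduced with respect to each $b_i$ with $i\le k'$, which is part of the hypothesis that $p$ is reduced with respect to $BS$.

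Then I compare $T$ with $BS$. The first $k'$ entries agree, so they are equivalent under $\sim$.
\begin{itemize}
\item If $m_{b_k}=m$, then $p$ reduced with respect to $b_k$ gives $\deg(p)<\deg(b_k)$, so $p\prec b_k$ at position $k$.
\item If $k'=k<r$, then $m<m_{b_{k+1}}$, so $p\prec b_{k+1}$ at position $k+1$.
\item If $k'=k=r$, then $T$ has length $r+1>r$ and agrees with $BS$ on the first $r$ entries. Case 2 of Definition~\ref{triSetOrder} gives $T\prec BS$.
\end{itemize}
In every case $T\prec BS$.

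The main obstacle is bookkeeping rather than mathematics. In the formalization, the ascending property is expressed through the abstract \lean{reducedTo'} of \lean{AscendingSetTheory}, so one must check that the relation used for $p$ reducedToSet matches it in the pairs involving $p$, or specialise to the standard instance. One must also build the truncated-and-appended \lean{TriangularSet} and verify its invariants. The comparison itself uses only \lean{lt\_def} for triangular sets together with the polynomial \lean{lt\_def}. The constant-$b_1$ edge case, which the formalization allows, has to be handled separately as above.
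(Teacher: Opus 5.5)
Your argument is correct. It is the classical textbook proof: build $T=(b_1,\dots,b_{k'},p)\subseteq PS\cup\{p\}$, show $T\prec BS$, and invoke minimality.

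The paper gives no mathematical proof of its own. In the formalization the statement is the field \lean{basicSet\_append\_lt\_of\_exists\_reducedToSet} of the class \lean{HasBasicSet}. It is therefore an interface assumption that each concrete basic-set implementation must discharge, and the displayed theorem is just a restatement of that field.

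Your route derives the statement from the remaining fields \lean{basicSet\_isAscendingSet}, \lean{basicSet\_subset} and \lean{basicSet\_minimal}. This shows the field is redundant as soon as reducedness in the sense of Definition~\ref{reduce} implies the abstract \lean{reducedTo'}. That implication holds trivially for standard ascending sets. It also holds for the weak notion, where only initials must be reduced, because $\deg_{X_i}(\init(p))\le\deg_{X_i}(p)$. This one-way implication is all your last paragraph needs; you do not need the two relations to ``match''.

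What the paper's choice buys is independence from \lean{reducedTo'}. For an arbitrary relation the claim is false. Take \lean{reducedTo'} to be always false: then ascending sets have length at most one, and for $PS=\{x\}$, $p=y$ with $x<y$, the basic set of $PS\cup\{p\}$ is still $(x)$. At that level of generality the property must be postulated, or proved instance by instance.

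A minor omission: in the constant case you should also cover $r=0$. There $(p)\prec BS$ by case~2 of Definition~\ref{triSetOrder}.
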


\begin{leancode}
theorem basicSet_append_lt_of_exists_reducedToSet
    (h : ∃ p ∈ l2, p ≠ 0 ∧ p.reducedToSet l1.basicSet) : (l2 ++ l1).basicSet < l1.basicSet
\end{leancode}

\section{Polynomial Pseudo-Division}

\begin{theorem}[{\ourdef[Pseudo-Remainder]{MvPolynomial.pseudo}}]
\label{remainder}
Let \(f,g \in R[X_i]_{i\in\sigma}\), and suppose that \(f\) is nonconstant with initial \(I=\init(f)\). Then there exist \(s\in\mathbb{N}\), \(q\in R[X_i]_{i\in\sigma}\), and \(r\in R[X_i]_{i\in\sigma}\) such that
\[
I^s \cdot g = q \cdot f + r,
\]
where \(r\) is reduced with respect to \(f\). The polynomial \(r\) is called the \emph{pseudo-remainder} of \(g\) with respect to \(f\), denoted by
\[
r=\remdr(g/f).
\]
The following procedure gives an algorithm for computing \(s\), \(q\), and \(r\).
\end{theorem}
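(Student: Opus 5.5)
We prove the statement by strong induction on \(\deg_m(g)\), where \(X_m\) is the main variable of \(f\) and \(d=\deg(f)\ge 1\); this is exactly the classical pseudo-division loop, and the induction doubles as the termination proof of the algorithm. We write \(f = I X_m^d + f_0\) with \(\deg_m(f_0) < d\) and \(I\in R[X_j]_{j<m}\). This decomposition follows from Definition~\ref{initial} via \lean{initialOf\_eq\_leadingCoeff}: we view \(R[X_i]_{i\in\sigma}\) as a univariate polynomial ring in \(X_m\) over \(R[X_j]_{j\ne m}\), and \(I\) is its leading coefficient.

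\emph{Base case.} If \(g=0\) or \(\deg_m(g) < d\), we take \(s=0\), \(q=0\), \(r=g\). Then \(r\) is reduced with respect to \(f\) by Definition~\ref{reduce}, since the main variable of \(f\) is \(X_m\) (not \(\bot\), because \(f\) is nonconstant) and \(\deg_m(g) < \deg_m(f)\).

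\emph{Inductive step.} Suppose \(e=\deg_m(g)\ge d\), and let \(c\) be the leading coefficient of \(g\) in \(X_m\), so that \(g = c X_m^e + g_0\) with \(\deg_m(g_0)<e\). Set
\[
g' = I\cdot g - c X_m^{e-d} f = I g_0 - c X_m^{e-d} f_0 .
\]
Because \(I\) and \(c\) do not involve \(X_m\), both terms on the right have \(X_m\)-degree \(< e\). Hence \(\deg_m(g')<e\). The induction hypothesis gives \(s',q',r'\) with \(I^{s'} g' = q' f + r'\) and \(r'\) reduced with respect to \(f\). Multiplying the definition of \(g'\) by \(I^{s'}\) yields
\[
I^{s'+1} g = \bigl(q' + I^{s'} c X_m^{e-d}\bigr) f + r'.
\]
So we take \(s=s'+1\), \(q=q'+I^{s'}cX_m^{e-d}\), and \(r=r'\).

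The main obstacle is the degree bookkeeping in the multivariate setting, in particular the claim \(\deg_m(I g_0 - c X_m^{e-d} f_0) < e\). Over an arbitrary commutative ring \(R\) the leading terms need not behave multiplicatively, because \(I\) may be a zero divisor. For that reason we only use upper bounds of the form \(\deg_m(ab)\le\deg_m a+\deg_m b\) and \(\deg_m(a-b)\le\max\), and never an exact degree. To get these cleanly, we would transport the whole argument through the isomorphism with univariate polynomials over \(R[X_j]_{j\ne m}\) already used in \lean{initialOf\_eq\_leadingCoeff}. There, \(\deg_m\) becomes \lean{natDegree}, and the existing univariate lemmas apply directly (Mathlib's pseudo-division or \lean{modByMonic}-style facts). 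This reduces the statement to the univariate pseudo-division identity \(\mathrm{lc}(f)^{s} g = q f + r\) with \(\deg r<\deg f\), which we then map back.
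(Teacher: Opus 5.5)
Your strong induction on $\deg_m(g)$ is correct and is essentially the paper's argument. The cancellation step $g' = I g - c X_m^{e-d} f$, with $\deg_m(g') < \deg_m(g)$ obtained from upper bounds only, is exactly one pass of the loop \lean{pseudoOf.go}, which terminates by \lean{r.degreeOf i}; you simply unroll it as a non-tail recursion, assembling the quotient as $q' + I^{s'} c X_m^{e-d}$ instead of accumulating $q := Iq + t$. The closing transport to univariate polynomials is unnecessary, and \lean{modByMonic} would not apply directly there because $I$ need not be a unit, but your direct inductive argument does not depend on it.
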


\noindent\fbox{%
\begin{minipage}{\dimexpr\linewidth-2\fboxsep-2\fboxrule\relax}
\begin{algorithmic}[1]
\REQUIRE polynomial $g$, polynomial $f$ with its main variable $X_m$, $I = \init(f)$
\ENSURE remainder $r$ of $g$ divided by $f$
\STATE $s := 0$, $q := 0$, $r := g$
\STATE $d_f := \deg_m(f)$
\REPEAT
  \STATE $d_r := \deg_m(r)$
  \STATE $t := \init_m(r) \cdot X_m^{d_r - d_f}$
  \STATE $r := I \cdot r - t \cdot f$
  \STATE $q := I \cdot q + t$
  \STATE $s := s + 1$
\UNTIL {$d_r < d_f$}
\RETURN $r$
\end{algorithmic}
\end{minipage}%
}

The key step is Step~6, where \(r\) is multiplied by \(I\), while \(f\) is multiplied by \(\init_m(r)\), so that the leading term of \(r\) is canceled. The factor \(X_m^{d_r-d_f}\) aligns the degrees of \(r\) and \(f\) in the main variable \(X_m\). Consequently, the degree of \(r\) strictly decreases at each iteration.

\begin{leancode}
def pseudoOf.go (i : σ) (f : MvPolynomial σ R) (s : ℕ) (q r : MvPolynomial σ R)
    (h : f.degreeOf i ≠ 0) : PseudoResult (MvPolynomial σ R) :=
  if r.degreeOf i < f.degreeOf i then ⟨s, q, r⟩
  else
    letI d := r.degreeOf i
    letI Ic_r := r.initialOf i
    letI x_power := X i ^ (d - f.degreeOf i)
    let term := Ic_r * x_power
    let I := f.initialOf i
    let q' := I * q + term
    let r' := I * r - term * f
    go i f (s + 1) q' r' h
  termination_by r.degreeOf i

def pseudoOf (i : σ) (f : MvPolynomial σ R) : PseudoResult (MvPolynomial σ R) :=
  if h : f.degreeOf i = 0 then ⟨1, g, 0⟩
  else pseudoOf.go i f 0 0 g 

def pseudo (f : MvPolynomial σ R) : PseudoResult (MvPolynomial σ R) :=
  if f = 0 then ⟨0, 0, g⟩
  else
    match f.vars.max with
    | ⊥ => ⟨0, (f.coeff 0)⁻¹ • g, 0⟩
    | some c => g.pseudoOf c f

theorem pseudo_equation : f.initial ^ (g.pseudo f).exponent * g = (g.pseudo f).quotient * f + (g.pseudo f).remainder
\end{leancode}

This allows us to define the pseudo-remainder with respect to a triangular set.

\begin{theorem}[{\ourdef[Pseudo-remainder with respect to a triangular set]{MvPolynomial.setPseudo}}]
Let \(S=(s_1,\ldots,s_k)\) be a triangular set and let \(g \in R[X_i]_{i\in\sigma}\). Then there exist
\[
r, q_1,\ldots,q_k \in R[X_i]_{i\in\sigma}
\]
and \(e_1,\ldots,e_k \in \mathbb{N}\) such that
\[
\left(\prod_{i=1}^k \init(s_i)^{e_i}\right) g
=
\sum_{i=1}^k q_i s_i + r,
\]
where \(r\) is reduced with respect to \(S\).
 The polynomial \(r\) is called the \emph{pseudo-remainder} of \(g\) with respect to \(S\) and is denoted by
\[
r=\remdr(g/S).
\]
\end{theorem}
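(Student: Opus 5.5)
The plan is to apply Theorem~\ref{remainder} repeatedly, starting from the last element $s_k$ and moving down to $s_1$. Put $r_k := g$. For $i = k, k-1, \ldots, 1$, let $r_{i-1} := \remdr(r_i/s_i)$ with exponent $e_i$ and quotient $\tilde q_i$, so that
\[
\init(s_i)^{e_i}\, r_i = \tilde q_i\, s_i + r_{i-1}.
\]
We then set $r := r_0$. If $s_i$ is a nonzero constant (only $s_1$ can be, in our formalization), we use the constant branch of \lean{pseudo}: take $e_i=0$, $\tilde q_i = s_i^{-1} r_i$ and $r_{i-1}=0$. This branch needs $R$ to be a field, or at least needs $s_1$ to be a unit; otherwise we take $e_1=1$, $\tilde q_1 = r_1$ and again $r_0 = 0$, which is fine because $\init(s_1)=s_1$ in that case. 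The zero polynomial is reduced with respect to everything, so the constant case is harmless.

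\textbf{The identity.} We prove the displayed identity by induction on the number of steps. Define $J_i := \prod_{j\le i} \init(s_j)^{e_j}$. Multiplying the step equations successively by the remaining initial powers gives
\[
J_k\, g = \sum_{i=1}^k \Bigl(\prod_{j<i}\init(s_j)^{e_j}\Bigr)\tilde q_i\, s_i + r_0 .
\]
So we take $q_i := \bigl(\prod_{j<i}\init(s_j)^{e_j}\bigr)\tilde q_i$. In Lean this is cleanest as a recursion on the length of the triangular set: the statement for $(s_1,\dots,s_{k-1})$ is applied to $r_{k-1}$, and the result is combined with the single-step \lean{pseudo\_equation}. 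This is pure ring arithmetic.

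\textbf{Reducedness (the main obstacle).} We must show that $r_0$ is reduced with respect to every $s_i$, not only $s_1$. Suppose $s_i$ has main variable $X_{m_i}$. After step $i$, $\deg_{m_i}(r_{i-1}) < \deg(s_i)$. We need this bound to survive the later steps $j<i$. The key lemma is: if $m_j < m_i$, then $\deg_{m_i}\remdr(h/s_j) \le \deg_{m_i}(h)$. The reason is that $s_j$ and $\init(s_j)$ involve only variables $X_l$ with $l \le m_j < m_i$, so they have degree $0$ in $X_{m_i}$. In the algorithm every update $r := I r - t f$ has $t = \init_{m_j}(r) X_{m_j}^{d}$, which has $X_{m_i}$-degree at most that of $r$, and $X_{m_i}$-degrees do not grow under sums or under multiplication by $X_{m_i}$-free factors. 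I would prove this as an invariant of \lean{pseudoOf.go} by well-founded induction along its termination measure, using \lean{degreeOf\_mul\_le} and \lean{degreeOf\_add\_le}-style Mathlib lemmas together with the fact that \lean{initialOf} does not increase \lean{degreeOf} in any other variable. Once this invariant holds, induction downward shows $\deg_{m_i}(r_0) \le \deg_{m_i}(r_{i-1}) < \deg(s_i)$ for every $i$, so $r_0$ is reduced with respect to $S$.
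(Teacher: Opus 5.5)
Your proposal follows the paper's route: pseudo-divide $g$ by $s_k$, then the remainder by $s_{k-1}$, and so on down to $s_1$. The main argument is sound. Your accumulated quotients $q_i=\bigl(\prod_{j<i}\init(s_j)^{e_j}\bigr)\tilde q_i$ match what \lean{setPseudo.go} builds. Your invariant is that pseudo-division by $s_j$ cannot raise the degree in any $X_{m_i}$ with $m_j<m_i$. That is exactly what is needed for $r_0$ to be reduced with respect to all of $S$. The paper's proof never argues this, so you are supplying a step it leaves implicit.

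The one step that fails is your fallback for a non-unit constant $s_1$. The paper sets $I_p=1$ for a nonzero constant $p$, not $I_p=p$; the Lean \lean{initial} likewise returns $1$ on the $\bot$ branch. With $e_1=1$, $\tilde q_1=r_1$, $r_0=0$ you would therefore need $r_1=r_1s_1$, which is false in general.

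No other choice of $e_1,\tilde q_1$ repairs this. Since no nonzero polynomial is reduced with respect to a constant, $r$ is forced to be $0$. Take $R=\mathbb{Z}$, $S=(2)$, $g=1$: you would need $1=2q_1$, which is impossible.

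So in this case the statement genuinely requires $R$ to be a field, or $s_1$ to be a unit. This is why the formalized \lean{pseudo} inverts the constant coefficient. You were right to flag that requirement; drop the fallback and state the hypothesis instead.
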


\begin{proof}
This remainder can be computed by recursively performing pseudo-division, starting from the last element of \(S\). First, we perform pseudo-division of \(g\) by \(s_k\), obtaining \(e_k\), \(q_k\), and \(r_k\). We then perform pseudo-division of \(r_k\) by \(s_{k-1}\), obtaining \(e_{k-1}\), \(q_{k-1}\), and \(r_{k-1}\). Continuing in this way, we eventually obtain a final remainder, \(r_0\). This polynomial \(r_0\) is the pseudo-remainder of \(g\) with respect to \(S\).

\end{proof}

\begin{leancode}
def setPseudo.go (f : ℕ → MvPolynomial σ R) (fuel : ℕ) (es : List ℕ)
    (qs : List (MvPolynomial σ R)) (r : MvPolynomial σ R) : SetPseudoResult (MvPolynomial σ R) :=
  if fuel = 0 then ⟨es, qs, r⟩
  else
    let p := r.pseudo (f (fuel - 1))
    let es' := p.exponent :: es
    let qs' := p.quotient :: qs.map (· * (f (fuel - 1)).initial ^ p.exponent)
    let r' := p.remainder
    go f (fuel - 1) es' qs' r'

def setPseudo (S : TriangularSet σ R) : SetPseudoResult (MvPolynomial σ R) :=
  setPseudo.go S S.length [] [] g

theorem setPseudo_equation : letI result := g.setPseudo S
    (Π i : Fin result.exponents.length, (S i).initial ^ result.exponents[i]) * g
    = (∑ i : Fin result.quotients.length, result.quotients[i] * S i) + result.remainder
\end{leancode}

\section{Characteristic Set}

\begin{definition}[{\ourdef[Characteristic Set]{TriangularSet.isCharacteristicSet}}]
\label{charSet}
A triangular set \(CS\) is called a \emph{characteristic set} of a polynomial set \(PS\) if it satisfies the following conditions:
\begin{enumerate}
  \item for every \(p \in PS\), \(\remdr(p/CS)=0\);
  \item \(\zero(PS) \subseteq \zero(CS)\).
\end{enumerate}
\end{definition}

\begin{leancode}
def TriangularSet.isCharacteristicSet [CommSemiring K] [Algebra R K]
    (CS : TriangularSet σ R) (a : α) : Prop :=
  (∀ g ∈ a, (0 : MvPolynomial σ R).isSetRemainder g CS) ∧ vanishingSet K a ⊆ vanishingSet K CS
\end{leancode}

\begin{theorem}[{\ourdef[Well-ordering Principle]{CharacteristicSet.vanishingSet_diff_initialProd_subset}}]
\label{wellOP}

Let \(CS = (c_1,\ldots,c_r)\) be a characteristic set of \(PS\), and let
\(
IP=\prod_{i=1}^r I_{c_i}
\)
be the initial product of \(CS\). Then
\[
\zero(CS/IP) \subseteq \zero(PS),
\]
and consequently
\[
\zero(CS/IP)=\zero(PS/IP).
\]
Moreover,
\begin{equation}\label{wellOP3}
\zero(PS)
=
\zero(CS/IP)
\cup
\left(\bigcup_{p\in CS}\zero(PS\cup\{I_p\})\right).
\end{equation}
\end{theorem}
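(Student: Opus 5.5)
The plan is to work pointwise with a zero $x$, in some extension $K$ of $R$ over which $\zero$ is taken, and to use only the defining identity of the pseudo-remainder with respect to a triangular set (the \lean{setPseudo\_equation}) together with the two defining conditions of a characteristic set in Definition~\ref{charSet}. Throughout, $\zero(CS/IP)$ means $\{x\in\zero(CS) : IP(x)\neq 0\}$.

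\textbf{First inclusion.} Let $x\in\zero(CS/IP)$ and let $p\in PS$. Condition~1 of Definition~\ref{charSet} gives $\remdr(p/CS)=0$. Hence there are exponents $e_i$ and quotients $q_i$ with
\[
\Bigl(\prod_{i=1}^r I_{c_i}^{e_i}\Bigr)p=\sum_{i=1}^r q_i c_i .
\]
Evaluate this at $x$. The right-hand side vanishes because every $c_i(x)=0$. On the left, each $I_{c_i}(x)\neq 0$, since their product $IP(x)$ is nonzero. So $\prod_i I_{c_i}(x)^{e_i}\neq 0$, and because $K$ is an integral domain we get $p(x)=0$. Thus $x\in\zero(PS)$.

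\textbf{The equality $\zero(CS/IP)=\zero(PS/IP)$.} Intersect both sides of the first inclusion with the complement of $\zero(IP)$: this gives $\zero(CS/IP)\subseteq\zero(PS/IP)$. For the converse, condition~2 gives $\zero(PS)\subseteq\zero(CS)$; intersecting again with the set where $IP\neq 0$ yields $\zero(PS/IP)\subseteq\zero(CS/IP)$.

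\textbf{Decomposition~(\ref{wellOP3}).} For ``$\supseteq$'', the term $\zero(CS/IP)$ is contained in $\zero(PS)$ by the first part, and $\zero(PS\cup\{I_p\})\subseteq\zero(PS)$ trivially. For ``$\subseteq$'', take $x\in\zero(PS)$ and split on $IP(x)$.
\begin{itemize}
\item If $IP(x)\neq 0$, condition~2 puts $x\in\zero(CS)$, so $x\in\zero(CS/IP)$.
\item If $IP(x)=0$, then the product $\prod_{p\in CS} I_p(x)$ vanishes. Since $K$ is a domain, some factor $I_p(x)=0$, so $x\in\zero(PS\cup\{I_p\})$ for that $p$.
\end{itemize}

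The only genuine obstacle is bookkeeping rather than mathematics. The Lean result indexes exponents and quotients by lists of possibly different lengths, so one must check that the exponent list is indexed compatibly with $S$, so that the product on the left really ranges over initials of elements of $CS$. A second point is that the case of a constant $c_1$, whose initial is $1$, must not break the nonvanishing argument; it does not, since $1\neq 0$. I would first prove an auxiliary lemma that, on evaluation, a product of powers of the initials is nonzero whenever $IP(x)\neq0$, and reuse it in both directions.
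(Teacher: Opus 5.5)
Your argument is correct and follows the same route as the paper's formalization. The inclusion $\zero(CS/IP)\subseteq\zero(PS)$ uses only the zero-remainder condition, evaluated through the set pseudo-division identity, which is exactly the hypothesis of \lean{vanishingSet\_diff\_initialProd\_subset}; the equality and the decomposition then add condition~2 of Definition~\ref{charSet} and a case split on whether $IP(x)$ vanishes. You were right to make the integral-domain hypothesis on $K$ explicit: both the cancellation of $\prod_i I_{c_i}(x)^{e_i}$ and the step ``$IP(x)=0$ implies some $I_p(x)=0$'' genuinely depend on it.
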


\begin{leancode}
theorem vanishingSet_diff_initialProd_subset
    (h : (∀ g ∈ PS, (0 : MvPolynomial σ R).isSetRemainder g CS)) :
    vanishingSet K CS \ vanishingSet' K (initialProd CS.toFinset) ⊆ vanishingSet K PS

theorem vanishingSet_diff_initialProd_eq (h : CS.isCharacteristicSet K PS) :
    vanishingSet K CS \ vanishingSet' K (initialProd CS.toFinset) =
      vanishingSet K PS \ vanishingSet' K (initialProd CS.toFinset)

theorem vanishingSet_decomposition (h : CS.isCharacteristicSet K PS) : vanishingSet K PS =
      vanishingSet K CS \ vanishingSet' K (initialProd CS.toFinset) ∪
      (∪ p ∈ CS, vanishingSet K PS ∩ vanishingSet' K p.initial)
\end{leancode}

\begin{theorem}
A characteristic set $CS$ can be constructed in a finite number of steps using the following algorithm:
\end{theorem}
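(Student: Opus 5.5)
The algorithm I have in mind is Wu's standard loop. Set $PS_0 := PS$. At stage $k$, compute a basic set $BS_k$ of $PS_k$ using \lean{basicSet}. Then compute the remainder set
\[
RS_k := \{\remdr(p/BS_k) : p \in PS_k \setminus BS_k\} \setminus \{0\}.
\]
If $RS_k=\emptyset$, return $CS := BS_k$. Otherwise set $PS_{k+1} := PS_k \cup RS_k$ and repeat. Two things must be shown: the loop stops after finitely many stages, and the output satisfies the two conditions of Definition~\ref{charSet}.

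\textbf{Termination.} Suppose $RS_k \neq \emptyset$. By the pseudo-remainder theorem for triangular sets, every nonzero $r \in RS_k$ is reduced with respect to $BS_k$. The proposition preceding Section~4 (\lean{basicSet\_append\_lt\_of\_exists\_reducedToSet}) then gives $BS_{k+1} \prec BS_k$, since $PS_{k+1}$ is $RS_k$ appended to $PS_k$. So any nonterminating run would produce an infinite strictly decreasing sequence of triangular sets $BS_0 \succ BS_1 \succ \cdots$. Because $\sigma$ is finite, Proposition~\ref{wellOrderingTriSets} forbids this. In Lean, I would define the loop by well-founded recursion on the basic set, using the \lean{WellFoundedLT} instance and the measure $k \mapsto BS_k$. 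The decrease lemma above is exactly the termination obligation.

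\textbf{Correctness.} Let $m$ be the terminal stage and $CS := BS_m$.

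\emph{Condition 1.} For $p \in PS_m \setminus CS$ we have $\remdr(p/CS)=0$ because $RS_m=\emptyset$. For $p \in CS$, say $p=c_j$, the remainder is also $0$. Pseudo-dividing $c_j$ by $c_i$ with $i>j$ changes nothing, because $c_j$ is reduced with respect to $c_i$ (its main variable is smaller). Dividing by $c_j$ itself gives remainder $0$. I have to check this against the concrete definition \lean{pseudoOf.go}: one step produces $I\cdot c_j - I\cdot c_j = 0$. In the constant case, \lean{pseudo} returns remainder $0$ directly. Since $PS \subseteq PS_m$, condition 1 holds for $PS$.

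\emph{Condition 2.} I show $\zero(PS_{k+1}) = \zero(PS_k)$ for every $k$ by induction. The inclusion $\subseteq$ is immediate. For $\supseteq$, take $r=\remdr(p/BS_k)$. The identity
\[
\Bigl(\prod_i I_{s_i}^{e_i}\Bigr) p = \sum_i q_i s_i + r
\]
(\lean{setPseudo\_equation}) shows that $r$ vanishes wherever both $p$ and all of $BS_k \subseteq PS_k$ vanish. Hence $\zero(PS)=\zero(PS_m)\subseteq \zero(CS)$, using $CS\subseteq PS_m$ from \lean{basicSet\_subset}.

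\textbf{Main obstacle.} The delicate point is showing that elements of $CS$ itself have zero remainder, since this depends on the operational behaviour of \lean{setPseudo.go}. A simpler route, which I would try first, is to let $RS_k$ range over all of $PS_k$ and not only $PS_k\setminus BS_k$. Then condition 1 at termination becomes definitional, and the termination argument is unchanged because only nonzero remainders are appended.
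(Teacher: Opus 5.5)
Your proof is correct for the loop you describe, and its engine is the paper's. The Lean definition recurses with \lean{termination\_by l.basicSet}, so the strict decrease comes from \lean{basicSet\_append\_lt\_of\_exists\_reducedToSet} and termination from Proposition~\ref{wellOrderingTriSets}, exactly as in your argument. The genuine difference is the update step. The paper's algorithm resets the working set to $PS' := PS \cup RS \cup BS$ (in Lean: the original list, then $RS$, then the list of $BS$), which discards earlier remainders. You instead accumulate $PS_{k+1} := PS_k \cup RS_k$. With your update, the append lemma applies verbatim with $l_1 = PS_k$, and the invariant $\zero(PS_{k+1}) = \zero(PS_k)$ is immediate. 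With the paper's update, the lemma has to be applied with $l_1$ equal to the list of $BS$ itself, which is why $BS$ is appended last. This gives $\mathrm{BasicSet}(PS') \prec \mathrm{BasicSet}(BS)$, and two further steps are then needed. First, \lean{basicSet\_minimal} gives $\mathrm{BasicSet}(BS) \preceq BS$. Second, \lean{basicSet\_subset} transfers the reducedness of $r$ from $BS$ to $\mathrm{BasicSet}(BS)$. The zero-set invariant also changes form: it becomes $\zero(PS') = \zero(PS)$, using inductively that $BS$ and $RS$ vanish on $\zero(PS)$. So the reset buys a working list that does not grow every round, while your accumulation buys a one-line termination obligation. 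Since the statement refers to the paper's specific loop, you would need these two extra steps to cover it. Both versions compute remainders only on $PS' \setminus BS$, so your check that elements of $CS$ have remainder $0$ is needed in either version. It is right as stated (later divisions of $0$ by $c_i$, $i<j$, also return $0$). If \lean{isSetRemainder} is the relational notion its name suggests, rather than the literal output of \lean{setPseudo}, that case is even trivial, since $c_j = 1\cdot c_j + 0$.
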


\noindent\fbox{%
\begin{minipage}{\dimexpr\linewidth-2\fboxsep-2\fboxrule\relax}
\begin{algorithmic}[1]
\REQUIRE  \( PS = (p_1, \ldots, p_s) \)
\ENSURE  a Characteristic set $CS = (c_1, \ldots, c_t)$ for \(PS\),
\STATE \( PS' := PS \)
\REPEAT
\STATE \( RS := \emptyset \)
\STATE \( BS := \text{BasicSet}(PS') \)
\FOR{each  \(\{p\}\) in \( PS' \)}
\STATE \( r := \remdr(p, BS) \)
\IF{\( r \neq 0 \)}
\STATE \( RS := RS \cup \{r\} \)
\ENDIF
\ENDFOR
\STATE \( PS' := PS \cup RS \cup BS \)
\UNTIL{\( R = \emptyset \)}
\RETURN \( CS := BS \)
\end{algorithmic}
\end{minipage}%
}

\begin{leancode}
def characteristicSet.go [Finite σ] (l₀ l : List (MvPolynomial σ R))
    : TriangularSet σ R :=
  let BS := l.basicSet.val
  let lBS := BS.toList
  let RS : List _ := ((l \ lBS).map fun p ↦ (p.setPseudo BS).remainder).filter (· ≠ 0)
  if RS = [] then BS
  else go l₀ (l₀ ++ RS ++ lBS)
  termination_by l.basicSet

def characteristicSet : TriangularSet σ R := characteristicSet.go l l
\end{leancode}

\begin{theorem}[{\ourdef[Zero Decomposition Theorem]{CharacteristicSet.zeroDecomposition}}]
\label{zeroDecomp}
For a polynomial system \(PS\), there exists a finite collection of triangular sets
\[
\mathcal{ZD}=\{CS_j\}_j
\]
such that
\begin{equation}\label{zeroDecomp1}
    \zero(PS)=\bigcup_j \zero(CS_j/IP_j),
\end{equation}
where \(IP_j\) denotes the initial product of \(CS_j\). Moreover, for every \(j\) and every \(p\in PS\),
\[
    \remdr(p/CS_j)=0.
\]
\end{theorem}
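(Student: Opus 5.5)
We will prove the Zero Decomposition Theorem by well-founded induction on the basic set of the input system, using the well-ordering property of triangular sets (Proposition~\ref{wellOrderingTriSets}) together with the decreasing property \texttt{basicSet\_append\_lt\_of\_exists\_reducedToSet}. Concretely, we define a recursive procedure \(\mathrm{ZD}(PS)\). It first computes \(CS=\) the characteristic set of \(PS\), by the algorithm above. It then returns \(\{CS\}\cup\bigcup_{p\in CS,\ p \text{ nonconstant}}\mathrm{ZD}(PS\cup CS\cup\{I_p\})\). If \(CS\) contains a nonzero constant \(c\), then \(\zero(PS)\subseteq\zero(CS)=\emptyset\), and we simply return \(\{CS\}\); the same happens when some \(I_p\) is a constant. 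We prove the two claims, the set equation (\ref{zeroDecomp1}) and the vanishing remainders, simultaneously by the same induction.

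\emph{Correctness step.} For the set identity we use Theorem~\ref{wellOP}, equation (\ref{wellOP3}):
\[
\zero(PS)=\zero(CS/IP)\cup\bigcup_{p\in CS}\zero(PS\cup\{I_p\}).
\]
Since \(\zero(PS)\subseteq\zero(CS)\), we have \(\zero(PS\cup\{I_p\})=\zero(PS\cup CS\cup\{I_p\})\). The induction hypothesis applied to each recursive call then gives a finite union of sets \(\zero(CS_j/IP_j)\), and concatenating finite collections stays finite.

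For remainders, the top-level \(CS\) satisfies \(\remdr(p/CS)=0\) for all \(p\in PS\) by definition of characteristic set. For the recursive outputs, the induction hypothesis gives \(\remdr(q/CS_j)=0\) for all \(q\) in \(PS\cup CS\cup\{I_p\}\supseteq PS\), so the claim follows.

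\emph{Termination step (main obstacle).} We must show that the basic set of \(PS'=PS\cup CS\cup\{I_p\}\) is strictly smaller than the measure attached to \(PS\). The natural measure is the basic set of the list on which \(CS\) was finally computed, or directly \(CS\) itself; by construction \(CS\) is the basic set of some list \(L\) with \(L\subseteq PS\cup\text{remainders}\). The plan is as follows:
\begin{itemize}
\item By the AscendingSetTheory axiom \texttt{initial\_reducedToSet\_of\_max\_vars\_ne\_bot}, \(I_p\) is reduced with respect to \(CS\).
\item \(I_p\neq 0\).
\item Applying \texttt{basicSet\_append\_lt\_of\_exists\_reducedToSet} with \(l_1=\mathrm{list}(CS)\) gives \(\mathrm{basicSet}(\{I_p\}\cup CS)<\mathrm{basicSet}(CS)=CS\).
\item By minimality of basic sets (\texttt{basicSet\_minimal}), adding the elements of \(PS\) can only make the basic set smaller or equal, since any ascending subset of the smaller list is an ascending subset of the larger one. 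Hence \(\mathrm{basicSet}(PS')\preceq\mathrm{basicSet}(\{I_p\}\cup CS)\prec CS\).
\end{itemize}

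So we use the measure \(\mu(PS)=\mathrm{basicSet}(PS)\) and need the additional fact \(\mathrm{basicSet}(PS\cup CS\cup\{I_p\})\prec\mathrm{basicSet}(PS)\). This follows from the above once we know \(CS\preceq\mathrm{basicSet}(PS)\), which holds because the characteristic-set algorithm only ever decreases the basic set along its iterations. That monotonicity lemma, extracted by induction on \texttt{characteristicSet.go}, is the step we expect to require the most bookkeeping. Well-foundedness of \(\prec\) for finite \(\sigma\) then closes the induction.
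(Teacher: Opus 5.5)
Your proposal is correct and follows the paper's proof. It uses the same recursion on $PS\cup CS\cup\{I_p\}$ over the nonconstant $p\in CS$, obtains \eqref{zeroDecomp1} by iterating \eqref{wellOP3}, and inherits $\remdr(p/CS_j)=0$ from the chain of supersets of $PS$. What you add is an explicit termination argument, which the paper's written proof leaves implicit: the initial of a nonconstant element is reduced with respect to the ascending set $CS$, then \texttt{basicSet\_append\_lt\_of\_exists\_reducedToSet} and \texttt{basicSet\_minimal} apply. The lemma $CS\preceq\mathrm{basicSet}(PS)$ needs no monotonicity bookkeeping: every list processed by \texttt{characteristicSet.go} contains $PS$, so it follows at once from minimality.
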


\begin{proof}
The collection \(\mathcal{ZD}\) is computed by the following recursive algorithm. Given a polynomial set \(PS\), we first compute a characteristic set \(CS\) of \(PS\). If \(CS\) contains a nonzero constant, then we set
\(
\mathcal{ZD}=\{CS\}.
\)
Otherwise, for each polynomial \(p\in CS\), we recursively compute a zero decomposition of
\[
PS\cup CS\cup\{I_p\}.
\]
The final collection \(\mathcal{ZD}\) is obtained by taking the union of all these recursive decompositions together with \(CS\) itself.

The identity~\eqref{zeroDecomp1} follows by recursively applying~\eqref{wellOP3}. Indeed, at each step, \(\zero(PS)\) is decomposed into the component \(\zero(CS/IP)\) and the components corresponding to the vanishing of one of the initials \(I_p\).

It remains to prove the remainder condition. Each triangular set \(CS_j\) appearing in the final decomposition is a characteristic set of some polynomial set
\[
PS' = PS\cup CS\cup\{I_p\}
\]
arising in the recursive construction. Since \(PS\subseteq PS'\), Definition~\ref{charSet} gives
\[
\remdr(p/CS_j)=0
\]
for every \(p\in PS\). This proves the second property.
\end{proof}

\begin{leancode}
def zeroDecomposition (l : List (MvPolynomial σ R)) : List (TriangularSet σ R) :=
  let CS := l.characteristicSet
  let subDecomp := (CS.toList.filter fun p ↦ p.vars.max ≠ ⊥).attach.map
    fun ⟨p, _⟩ ↦ zeroDecomposition (p.initial :: CS.toList ++ l)
  CS :: subDecomp.flatten
\end{leancode}

\begin{leancode}
theorem vanishingSet_eq_zeroDecomposition_union :
    vanishingSet K l = ∪ CS ∈ l.zeroDecomposition,
      vanishingSet K CS \ vanishingSet' K (initialProd CS.toFinset)
\end{leancode}

\section{Conclusions}\label{sec4}

In this paper, we present a machine-checked formalization of the Wu-Ritt characteristic set method in Lean~4. Our development covers polynomial orders, pseudo-division, triangular and ascending sets, basic and characteristic sets, and zero decomposition, with proofs of termination and correctness. In particular, we verified the well-ordering principle and the zero-set decomposition theorem.

This work provides a reusable formal foundation for certified polynomial system solving and mechanical geometry theorem proving in Lean. Future work includes executable code extraction and extensions to differential polynomials.

\begin{credits}
\subsubsection{\ackname}This work was supported by the National Key R\&D Program of China 2023YFA1009401 and  the Strategic Priority Research Program of Chinese Academy of Sciences under Grant XDA0480502. Junyu Guo is encouraged and supported by his supervisor, Xishun Zhao, to work on this project.
\end{credits}

% \begin{credits}
% \subsubsection{\ackname} A bold run-in heading in small font size at the end of the paper is
% used for general acknowledgments, for example: This study was funded
% by X (grant number Y).

% \subsubsection{\discintname}
% It is now necessary to declare any competing interests or to specifically
% state that the authors have no competing interests. Please place the
% statement with a bold run-in heading in small font size beneath the
% (optional) acknowledgments\footnote{If EquinOCS, our proceedings submission
% system, is used, then the disclaimer can be provided directly in the system.},
% for example: The authors have no competing interests to declare that are
% relevant to the content of this article. Or: Author A has received research
% grants from Company W. Author B has received a speaker honorarium from
% Company X and owns stock in Company Y. Author C is a member of committee Z.
% \end{credits}

%
% ---- Bibliography ----
%
% BibTeX users should specify bibliography style 'splncs04'.
% References will then be sorted and formatted in the correct style.
%
% \bibliographystyle{splncs04}
% \bibliography{mybibliography}
%

\bibliographystyle{splncs04}
\bibliography{main}

\end{document}